\documentclass{amsart}

\usepackage{amsmath, amssymb,amsfonts,amsthm}
\usepackage{tikz}
\usepackage{textcomp}
\usepackage{seqsplit}
\usepackage{setspace}
\usepackage{listings}
\usepackage{marvosym}
\usepackage[top=1.9in, bottom=1.9in, left=2in, right=2in]{geometry}
\usepackage{nopageno} 
\usepackage{mathdots}
\usepackage{enumitem}
\usepackage{colortbl}
\usepackage{graphicx}

\newcommand{\Z}{{\mathbb{Z}}}

\newcommand{\N}{{\mathbb{N}}}
\newcommand{\floor}[1]{\left \lfloor{#1}\right \rfloor}


\newtheorem{thm}{Theorem}[section]

\newtheorem{lem}[thm]{Lemma}
\newtheorem{cor}[thm]{Corollary}


\theoremstyle{definition}
\newtheorem{definition}[thm]{Definition}
\newtheorem{example}[thm]{Example}
\newtheorem{example*}{Example}

\theoremstyle{remark}


%

\begin{document}

\title{Bounds on Zimin Word Avoidance}

\author{Joshua Cooper*}
\author{Danny Rorabaugh*}
\let\thefootnote\relax\footnote{*University of South Carolina}

\begin{abstract}
	How long can a word be that avoids the unavoidable? Word $W$ encounters word $V$ provided there is a homomorphism $\phi$ defined by mapping letters to nonempty words such that $\phi(V)$ is a subword of $W$. Otherwise, $W$ is said to avoid $V$. If, on any arbitrary finite alphabet, there are finitely many words that avoid $V$, then we say $V$ is unavoidable. Zimin (1982) proved that every unavoidable word is encountered by some word $Z_n$, defined by: $Z_1 = x_1$ and $Z_{n+1} = Z_nx_{n+1}Z_n$. Here we explore bounds on how long words can be and still avoid the unavoidable Zimin words.
\end{abstract}

\maketitle

\thispagestyle{empty}

In 1929, Frank Ramsey proved that, for any fixed $r,n,\mu \in \Z^+$, every sufficiently large set $\Gamma$ with its $r$-subsets partitioned into $\mu$ classes is guaranteed to have a subset $\Delta_n \subseteq \Gamma$ such that all the $r$-subsets of $\Delta_n$ are in the same class \cite{Ramsey}. This was the advent of a major branch of combinatorics that became known as Ramsey theory. Often applied to graph theoretic structures, Ramsey theory looks at how large a random structure must be to guarantee that a given substructure exists or a given property is satisfied. Here we apply this paradigm to an existence result from the combinatorics of words.

\begin{definition}
	A {\it $q$-ary word} is a string of characters, at most $q$ of them distinct.
\end{definition}

Over a fixed $q$-letter alphabet, the set of all finite words forms a semigroup with concatenation as the binary operation (written multiplicatively) and the empty word $\varepsilon$ as the identity element. We also have a binary subword relation $\leq$ where $V \leq W$ when $W = UVU'$ for some words $U$, $V$, and $U'$. That is, $V$ appears contiguously in $W$.
	
\begin{definition}
	We call word $W$ an {\it instance} of $V$ provided
	\begin{itemize}[leftmargin=.36in]
		\item $V = x_0x_1 \cdots x_{m-1}$ where each $x_i$ is a letter;
		\item $W = A_0A_1 \cdots A_{m-1}$ with each $A_i \neq \varepsilon$ and $A_i = A_j$ whenever $x_i=x_j$.
	\end{itemize}
	
	Equivalently, $W$ is a {\it $V$-instance} provided there exists some semigroup homomorphism $\phi$ such that $\phi(x_i) = A_i \neq \varepsilon$ for each $i$.
\end{definition}

\begin{example}
$W = abbcabbxdc$ is an instance of $V = xyxzy$, with $\phi$ defined by $\phi(x) = abb$, $\phi(y)=c$, and $\phi(z)=xd$.
\end{example}

\begin{definition}
	A word $U$ {\it encounters} word $V$ provided some subword $W \leq U$ is an instance of $V$. If $U$ fails to encounter $V$, then $U$ {\it avoids} $V$.
\end{definition}

\begin{figure}[h]
	\caption {Binary words that avoid $xx$.}
	\bigskip
	\begin{tikzpicture}
		\draw (0,0) node{$\varepsilon$};
		\draw (-1,.7) node[above]{$a$}--(0,.3)--(1,.7) node[above]{$b$};
		\draw (-1.5,1.7) node[above]{$aa$}--(-1,1.3)--(-.6,1.7) node[above]{$ab$};
		\draw (.6,1.7) node[above]{$ba$}--(1,1.3)--(1.5,1.7) node[above]{$bb$};
		\draw (-1.2,1.7)--(-1.8,2.2);
		\draw (-1.2,2.2)--(-1.8,1.7);
		\draw (1.2,1.7)--(1.8,2.2);
		\draw (1.2,2.2)--(1.8,1.7);
		\draw (-1.2,2.7) node[above]{$aba$}--(-.6,2.3)--(-.4,2.7) node[above]{$abb$};
		\draw (1.2,2.7) node[above]{$bab$}--(.6,2.3)--(.4,2.7) node[above]{$baa$};
		\draw (-.1,2.7)--(-.7,3.2);
		\draw (-.1,3.2)--(-.7,2.7);
		\draw (.1,2.7)--(.7,3.2);
		\draw (.1,3.2)--(.7,2.7);
		\draw(-1.7,3.7) node[above]{$abaa$}--(-1.2,3.2)--(-.6,3.7) node[above]{$abab$};
		\draw(1.7,3.7) node[above]{$babb$}--(1.2,3.2)--(.6,3.7) node[above]{$baba$};
		\draw (-2,3.7)--(-1.4,4.2);
		\draw (-1.4,3.7)--(-2,4.2);
		\draw (2,3.7)--(1.4,4.2);
		\draw (1.4,3.7)--(2,4.2);
		\draw (-.9,3.7)--(-.3,4.2);
		\draw (-.3,3.7)--(-.9,4.2);
		\draw (.9,3.7)--(.3,4.2);
		\draw (.3,3.7)--(.9,4.2);
	\end{tikzpicture}
\end{figure}
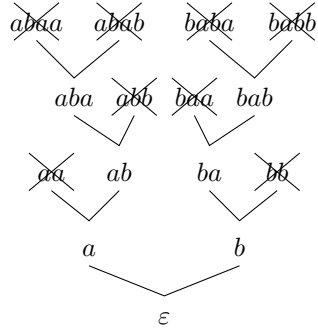

We see in Figure 1 that $xx$ is avoided by only finitely many words over a two-letter alphabet. However, it has been known for over a century \cite{Thue} that $xx$ can be avoided by arbitrarily long (even infinite) ternary words.

\begin{definition}
	A word $V$ is {\it unavoidable} provided for any finite alphabet, there are only finitely many words that avoid $V$.
\end{definition}

A. I. Zimin proved an elegant classification of all unavoidable words \cite{Zimin}.

\begin{definition}		
	Define the {\it $n^{th}$ Zimin word} recursively by $Z_0 := \varepsilon$ and, for $n \in \N$, $Z_{n+1} := Z_nx_nZ_n$. Using the alphabet rather than indexed variables:
	$$Z_1 = a, \quad Z_2 = a{\bf b}a, \quad Z_3 = aba{\bf c}aba, \quad Z_4 = abacaba{\bf d}abacaba, \quad \ldots$$

Equivalently, $Z_n$ can be defined over the natural numbers as the word of length $2^n-1$ such that the $i^{\text{th}}$ letter is the 2-adic order of $i$ for $1 \leq i < 2^n$.
\end{definition}
	
\begin{thm}[Zimin, 1982]
	A word $V$ with $n$ distinct letters is unavoidable if and only if $Z_n$ encounters $V$.
\end{thm}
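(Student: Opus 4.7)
I will prove the two implications separately, handling the easier one first.

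For the ``if'' direction (if $Z_n$ encounters $V$, then $V$ is unavoidable), I would first observe that encountering is transitive: if $\phi(U) \leq W$ and $\psi(V) \leq U$, then $(\phi \circ \psi)(V) \leq \phi(U) \leq W$, so the composite homomorphism witnesses $W$ encountering $V$. It then suffices to prove that $Z_n$ is itself unavoidable, which I would establish by induction on $n$ via a Ramsey-style pigeonhole. The base case $Z_1 = x_1$ is immediate, since any nonempty word encounters it. For the inductive step, fix a finite alphabet $\Sigma$ of size $q$; the inductive hypothesis provides $M$ such that every word over $\Sigma$ of length $\geq M$ contains a $Z_n$-instance. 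Take any word $W$ over $\Sigma$ long enough to split into more than $q^{M+1}$ disjoint consecutive blocks of length $M$. Each such block contains a $Z_n$-instance $A_i$ of length at most $M$; since there are fewer than $q^{M+1}$ nonempty strings of length at most $M$ over $\Sigma$, pigeonhole yields $A_i = A_j$ for some $i < j$. The two disjoint occurrences of the common string $A$ in $W$, together with the nonempty word $B$ between them, form a subword $ABA \leq W$, which is an instance of $Z_{n+1} = Z_n x_n Z_n$ via $\phi(Z_n) = A$ and $\phi(x_n) = B$.

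For the ``only if'' direction (if $V$ is unavoidable, then $Z_n$ encounters $V$), I would argue the contrapositive by induction on the number $n$ of distinct letters in $V$. In the base case $n = 1$, $V = x^k$: if $k = 1$, then $Z_1 = a$ trivially encounters $V$; if $k \geq 2$, then Thue's construction of a squarefree ternary word exhibits $V$ as avoidable. For the inductive step, I would combine a structural lemma with the recursive definition $Z_n = Z_{n-1} x_n Z_{n-1}$. The structural lemma asserts that every unavoidable word contains a letter appearing exactly once — a \emph{free letter} — proved contrapositively by constructing, for each word in which all letters repeat, an iterated Thue-type morphism over a finite alphabet whose fixed point avoids it. Given a free letter $x$ in $V = V_1 x V_2$, I would build an embedding $\phi(V) \leq Z_n$ by choosing $\phi(x)$ to straddle the central $x_n$ of $Z_n$; this is forced on any straddling embedding because $x_n$ appears only once in $Z_n$ and $x$ is the only free letter of $V$ whose image can cover it. The remaining task is to embed $V_1$ as a suffix-reaching portion and $V_2$ as a prefix-reaching portion of $Z_{n-1}$, with $\phi$ consistent on letters shared between the two halves; the inductive hypothesis, applied to suitably chosen sub-patterns of $Z_{n-1}$, provides the needed embeddings.

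The hardest step will be twofold. First, the structural lemma — that patterns in which every letter repeats are avoidable — requires an explicit morphic construction generalizing Thue's classical squarefree word to arbitrary ``doubled'' patterns over a bounded alphabet; this is the Bean-Ehrenfeucht-McNulty-style heart of the argument. Second, in the inductive step, the two embeddings of $V_1$ and $V_2$ into $Z_{n-1}$ must agree on their shared letters, and ensuring this compatibility likely requires a more refined induction than the bare ``unavoidable implies $Z_n$-encountered'' statement — one may need to strengthen the inductive hypothesis to track which prefixes and suffixes of $Z_{n-1}$ are reachable by a given pattern. By contrast, the ``if'' direction is a clean application of pigeonhole once transitivity of encountering is in hand.
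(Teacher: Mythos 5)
The paper does not actually prove this theorem: it is stated as a citation to Zimin, and the only machinery the paper develops in its vicinity is the pigeonhole argument of Theorem \ref{upper}, which is exactly your ``if'' direction. That half of your proposal is essentially correct and matches the paper's technique, with one small repair needed: if you merely chop $W$ into consecutive blocks of length $M$, the two equal instances $A_i = A_j$ could abut (e.g.\ $A_i$ a suffix of block $i$ and $A_j$ a prefix of the adjacent block $j=i+1$), making the middle word $B$ empty and so not a legal image of $x_n$. The paper avoids this by inserting a separator letter between consecutive blocks; you need that (or non-adjacent blocks) to assert $B \neq \varepsilon$.

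The ``only if'' direction, however, has a genuine gap, and it is the entire content of Zimin's theorem. You lean on two things you do not supply. First, the structural lemma that every word in which all letters repeat is avoidable is the Bean--Ehrenfeucht--McNulty theorem on doubled patterns; it is a substantial morphic construction, not a step one can wave at. Second, and more seriously, your plan to write $V = V_1 x V_2$ around a once-occurring letter and embed $V_1$ and $V_2$ separately into the two copies of $Z_{n-1}$ does not close: the two embeddings must agree on shared letters \emph{and} land in the correct positions relative to the central $x_n$, and neither is delivered by the bare inductive hypothesis ``unavoidable $\Rightarrow$ encountered by $Z_{n-1}$.'' A small example shows the positioning is delicate: $V = abcba$ is unavoidable ($Z_3 = abacaba$ is an instance via $a \mapsto a$, $b \mapsto b$, $c \mapsto aca$), but there the image of $V_1 = ab$ is a \emph{prefix} of the first $Z_2$, not a suffix, and the image of $c$ absorbs letters from both copies of $Z_2$ --- contrary to your proposed scheme. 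The standard resolution is not the once-occurring-letter induction at all but the free-set machinery: one shows $V$ is unavoidable iff it reduces to $\varepsilon$ by successive deletions of free sets (a condition on a bipartite adjacency graph of $V$), and builds the $Z_n$-encounter by reversing that reduction sequence with a suitably strengthened inductive statement. Your own closing paragraph correctly identifies where the difficulty lies, but identifying it is not the same as resolving it, so as written the hard direction remains unproved.
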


\bigskip

\section{Avoiding the Unavoidable}


From Zimin's explicit classification of unavoidable words, a natural question arises in the Ramsey theory paradigm: for a fixed unavoidable word $V$, how long can a word be that avoids $V$?
Our approach to this question is to start with avoiding the Zimin words, which gives upper bounds for all unavoidable words.
Define $f(n,q)$ to be the smallest integer $M$ such that every $q$-ary word of length $M$ encounters $Z_n$.

\begin{thm}
\label{upper}
	For $n,q \in \Z^+$ and $Q := 2q+1$,
	$$f(n,q) \leq {}^{n-1}Q := Q^{Q^{^{\iddots^{{}_Q}}}},$$
	with $Q$ occurring $n-1$ times in the exponential tower.
\end{thm}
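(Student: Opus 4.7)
The proof will be by induction on $n$. The base case $n=1$ is immediate: any nonempty word encounters $Z_1 = x_1$, so $f(1,q) \leq 1 = {}^{0}Q$ (the empty tower equals $1$ by convention).

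For the inductive step, I assume $f(n,q) \leq L := {}^{n-1}Q$ and let $W$ be a $q$-ary word with $|W| \geq Q^L$. The key structural observation is that an instance of $Z_{n+1} = Z_n x_{n+1} Z_n$ has the shape $AXA$, where $A$ is a $Z_n$-instance and $X$ is a nonempty word, so the task reduces to locating two occurrences of the same $Z_n$-instance in $W$ that are separated by at least one character. I partition $W$ into $M = \lfloor |W|/L \rfloor$ consecutive blocks $B_1, \ldots, B_M$ of length $L$; by the inductive hypothesis each $B_i$ contains a $Z_n$-instance, and I select one such $A_i \leq B_i$. If $A_i = A_j$ for some pair of indices with $j \geq i+2$, then an entire intermediate block separates $B_i$ from $B_j$, so the two occurrences of $A$ in $W$ have a gap of at least $L \geq 1$ characters, yielding the desired $Z_{n+1}$-instance. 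The restriction $j \geq i + 2$ is essential here, because two adjacent blocks could in principle abut two copies of $A$ with no gap, producing only $AA$, which in general does not encounter $Z_{n+1}$.

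It therefore remains to rule out the hostile scenario in which each word serves as $A_i$ for at most two consecutive values of $i$. In that case $M \leq 2T$, where $T$ is the number of distinct $q$-ary $Z_n$-instances of length at most $L$; a crude count gives $T \leq \sum_{k=1}^{L} q^k \leq 2q^L$ when $q \geq 2$, and the sharper $T \leq L - 2^n + 2$ when $q = 1$. Against this, $M \geq Q^L/L - 1$, so the argument closes once one verifies the elementary inequality $(2q+1)^L \geq L(2T+1)$ for each $L$ actually arising in the induction---namely $L = 1$ and $L \geq Q \geq 3$. This uniform numerical verification is the main obstacle: the case $L=1$ reduces to the familiar pigeonhole that a word of length $2q+1$ must contain some letter three times (yielding an $xyx$ subword), and for $L \geq 3$ one combines $(2+1/q)^L \geq 2^L$ with $2^L \geq 4L$ (valid for $L \geq 4$), handling the single boundary case $L=3$, $q=1$ via the sharper count $T = 1$.
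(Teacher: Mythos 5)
Your proof is correct and arrives at the same recurrence $f(n+1,q)\leq Q^{f(n,q)}$, but the mechanism in the inductive step is genuinely different from the paper's. The paper does not partition an arbitrary word into contiguous blocks; instead it builds the word as $q^{T}+1$ blocks of length $T=f(n,q)$ with an explicit single-letter separator $a_{i}$ inserted between consecutive blocks. Plain pigeonhole on the $q^{T}$ possible blocks then gives two equal blocks $W_i=W_j$, and the separator $a_{i+1}$ already guarantees that the two copies of the $Z_n$-instance inside them are non-abutting, so the word length $(T+1)(q^{T}+1)-1\leq Q^{T}$ suffices with no further case analysis. Your contiguous partition forgoes the separators, which is exactly what forces you to discard adjacent equal blocks, replace pigeonhole by the ``each value occurs for at most two consecutive indices'' count $M\leq 2T$, and then verify $(2q+1)^{L}\geq L(2T+1)$ by cases (with $L=1$, $q=1$, and the $L=3$ boundary handled separately). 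That verification does close --- for $n\geq 2$ and $q\geq 2$ one has $L\geq 2q+1\geq 5$, and there $2^{L}\geq L(4+q^{-L})$ holds comfortably, though note your stated inequality $2^{L}\geq 4L$ is a hair weaker than what is literally needed. In short: both arguments give the same tower ${}^{n-1}Q$; the paper's separator trick buys a one-line pigeonhole and eliminates your entire numerical verification, while your version has the mild virtue of applying directly to any given long word rather than to a word assembled in a prescribed block-plus-separator form.
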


\begin{proof}
	We proceed via induction on $n$. For the base case, set $n=1$. Every nonempty word is an instance of $Z_1$, so $f(1,q) = 1$.

	For the inductive hypothesis, assume the claim is true for some positive $n$ and set $T:=f(n,q)$. That is, every $q$-ary word of length $T$ encounters $Z_n$.  Concatenate any $q^T+1$ strings $W_0, W_1, \ldots, W_{q^T}$ of length $T$ with an arbitrary letter $a_i$ between $W_{i-1}$ and $W_{i}$ for each positive $i \leq q^T$:
	$$U := W_0 \; a_1\; W_1 \; a_2 \; W_2 \; a_3\; \cdots \; W_{q^T-1} \; a_{q^T} \; W_{q^T}.$$
	
	By the pigeonhole principle, $W_i = W_j$ for some $i < j$.
	That string, being length $T$, encounters $Z_n$.  
	Therefore, we have some word $W \leq W_i$ that is an instance of $Z_n$ and shows up twice, disjointly, in $U$.
	The extra letter $a_{i+1}$ guarantee that the two occurrences of $W$ are not consecutive.
	This proves that an arbitrary word of length $(T+1)(q^T + 1)-1$ witnesses $Z_{n+1}$, so
	$$f(n+1,q) \leq (T+1)(q^T + 1)-1  \leq (2q+1)^T = Q^T.$$
\end{proof}

There is clearly a function $Q(n,q)$ such that $f(n+1,q) \leq {Q(n,q)}^{f(n,q)}$ and $Q(n,q)\rightarrow q$ as $n\rightarrow \infty$. 
No effort has been made to optimize the choice of function, as such does not decrease the tetration in the bound.

The technique used to prove Theorem \ref{upper} is first found in Lothaire's proof of unavoidability of $Z_n$ (\cite{Lothaire}, 3.1.3). The technique in Zimin's original proof \cite{Zimin} implicitly gives that for $n\geq 2$,
$$f(n+1,q+1) \leq (f(n+1,q)+2|Z_{n+1}|)f(n,|Z_{n+1}|^2q^{f(n+1,q)}).$$
This is an Ackermann-type function for an upper bound, which is much larger than the primitive recursive bound from Theorem \ref{upper}.

Table 1 shows known values of $f(n,2)$. 
Supporting word-lists and Sage code are found in the Appendix.
\begin{table}[h]
	\caption{Values of $f(n,2)$ for $n\leq 4$.}
	\begin{tabular}{c|c|c}
		$n$ & $Z_n$ & $f(n,2)$\\ \hline
		0 & $\varepsilon$ & 0\\
		1 & a & 1\\
		2 & aba & 5\\
		3 & abacaba & 29\\
		4 & abacabadabacaba & $\geq 10483$
	\end{tabular}\\
\end{table}

\section{Finding a Lower Bound with the First Moment Method}

Throughout this section, $q$ is a fixed integer greater than 1. Given a fixed alphabet of $q$ letters, $C(n,q,M)$ denotes the set of length-$M$ instances of $Z_n$. That is
$$C(n,q,M) := \{W \mid W \in \{x_0,\ldots,x_{q-1}\}^M \text{ is a }Z_n\text{-instance}\}.$$

\begin{lem}
	For all $n,M \in \Z^+$,
	$$|C(n,q,M+1)| \geq q\cdot|C(n,q,M)|.$$
\end{lem}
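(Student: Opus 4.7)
The plan is to construct an injection $\Phi : C(n,q,M) \times \{0, 1, \ldots, q-1\} \to C(n,q,M+1)$, from which $|C(n,q,M+1)| \geq q \cdot |C(n,q,M)|$ follows immediately. The case $n=1$ is trivial, since every nonempty word is a $Z_1$-instance, so the map $(W,c) \mapsto Wc$ is even a bijection. The substantive case is $n \geq 2$.

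For $n \geq 2$, I exploit the recursion $Z_n = Z_{n-1} x_{n-1} Z_{n-1}$: a word $W$ is a $Z_n$-instance iff $W = UvU$ for some $Z_{n-1}$-instance $U$ and nonempty $v$. To canonicalize this factorization, let $U(W)$ denote the shortest $Z_{n-1}$-instance that is both a prefix and a suffix of $W$ with $|U(W)| < |W|/2$. Existence follows from $W$ being a $Z_n$-instance, and uniqueness of the shortest such is automatic since prefixes of $W$ of specified length are unique. Writing $W = U(W) \cdot v \cdot U(W)$, define
$$\Phi(W, c) := U(W) \cdot v \cdot c \cdot U(W),$$
a $Z_n$-instance of length $M+1$.

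The main obstacle is injectivity of $\Phi$. The key sub-claim is that $U(\Phi(W,c)) = U(W)$. Abbreviate $U := U(W)$ and $W' := \Phi(W,c) = UvcU$. On one hand, $U$ itself serves as a valid prefix-suffix decomposition of $W'$ with nonempty middle $vc$, so $|U(W')| \leq |U|$. On the other hand, if some $U'$ with $|U'| < |U|$ gave a valid decomposition of $W'$, then since the inserted $c$ lies strictly past the first $|U'|$ characters and strictly before the last $|U'|$ characters of $W'$, short prefixes and suffixes of $W'$ coincide with those of $W$. So $U'$ would also be a prefix and suffix of $W$, with middle of length $|W|-2|U'| > |W|-2|U| = |v| \geq 1$; this gives a strictly shorter valid decomposition of $W$, contradicting the minimality defining $U(W)$. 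Hence $|U(W')| = |U|$, and as a prefix of $W'$ of that length, $U(W') = U$. With the sub-claim in hand, injectivity is immediate: from $W'$, compute $U(W')$, extract the middle $vc$, read off $c$ as its last letter, and recover $W = UvU$.
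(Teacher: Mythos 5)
Your proposal is correct and follows essentially the same route as the paper: both decompose $W$ as $UvU$ with $U$ a $Z_{n-1}$-instance of minimal length, insert the new letter just before the final copy of $U$, and use that minimality to show the map is injective (the paper phrases this as showing a collision $W_1W_0aW_1 = V_1V_0bV_1$ between two minimal decompositions forces $V_1 = W_1$, $a=b$, $V=W$, while you phrase it as $U(\Phi(W,c)) = U(W)$ so that $\Phi$ is invertible on its image). The only cosmetic difference is your separate treatment of $n=1$, which the paper absorbs into the general case via $Z_0 = \varepsilon$.
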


\begin{proof}
	Take arbitrary $W \in C(n,q,M)$. 
	We can write $W = W_1W_0W_1$ with $W_1 \in C(n-1,q,N)$, where $2N < M$. 
	Choose the decomposition of $W$ to minimize $|W_1|$. 
	Then $W_1W_0x_iW_1 \in C(n,q,M+1)$ for each $i < q$. 
	
	The lemma follows, unless a $Z_n$-instance of length $M+1$ can be generated in two ways -- that is, if $W_1W_0aW_1 = V_1V_0bV_1$ for some $V_1V_0V_1 = V$, where $|V_1|$ is also minimized. 
	If $|V_1|<|W_1|$, then $V_1$ is a prefix and suffix of $W_1$, so $|W_1|$ was not minimized.
	But if $|V_1|>|W_1|$, then $W_1$ is a prefix and suffix of $V_1$, so $|V_1|$ was not minimized.
	Therefore, $|V_1|=|W_1|$, so $V_1=W_1$, which implies $a=b$ and $V=W$.
\end{proof}

\begin{cor}[Monotonicity] For all $n,M \in \Z^+$,
	\begin{eqnarray*}
		\Pr\left(W \in C(n,q,M+1) \mid W\in \{x_0,\ldots,x_{q-1}\}^{M+1}\right) \quad\\
		\geq \Pr\left(W \in C(n,q,M) \mid W\in \{x_0,\ldots,x_{q-1}\}^M\right),
	\end{eqnarray*}
	assuming uniform probability on words of a fixed length.
\end{cor}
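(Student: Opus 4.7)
The plan is to recognize the corollary as a direct rescaling of the preceding Lemma. Under the uniform distribution on $\{x_0,\ldots,x_{q-1}\}^M$, each length-$M$ word has probability $q^{-M}$, so the conditional probability appearing on the right-hand side is simply $|C(n,q,M)|/q^M$, and analogously the left-hand side is $|C(n,q,M+1)|/q^{M+1}$. Thus the inequality I want to establish is
$$\frac{|C(n,q,M+1)|}{q^{M+1}} \;\geq\; \frac{|C(n,q,M)|}{q^M}.$$

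Next, I clear denominators by multiplying both sides by $q^{M+1}$, which reduces the claim to
$$|C(n,q,M+1)| \;\geq\; q\cdot|C(n,q,M)|.$$
This is exactly the conclusion of the preceding Lemma, so the corollary follows immediately.

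There is essentially no obstacle here, since the corollary is a pure reformulation: the only thing to be careful about is confirming that probabilities under the uniform measure on words of fixed length $M$ are given by the count-over-$q^M$ formula, and that the set $C(n,q,M)$ is indeed the event whose probability is in question. Both are immediate from the definition of $C(n,q,M)$ as the set of length-$M$ instances of $Z_n$ over the fixed $q$-letter alphabet.
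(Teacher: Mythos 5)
Your proposal is correct and matches the paper's (implicit) reasoning exactly: the corollary is stated without proof precisely because it is the preceding Lemma's inequality $|C(n,q,M+1)| \geq q\cdot|C(n,q,M)|$ divided through by $q^{M+1}$, using that the uniform probability of the event is the count over $q^M$ (resp.\ $q^{M+1}$). Nothing further is needed.
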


\begin{lem}
	For all $n,M \in \Z^+$,
	$$|C(n,q,M)| \leq \left(\frac{q}{q-1}\right)^{n-1}q^{(M-2^n+n+1)}.$$
\end{lem}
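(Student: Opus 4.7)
The plan is to induct on $n$. For the base case $n=1$, every nonempty $q$-ary word is a $Z_1$-instance, so $|C(1,q,M)| = q^M$, and the right-hand side of the claimed bound is $\left(\tfrac{q}{q-1}\right)^0 q^{M-2+1+1} = q^M$, matching exactly.

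For the inductive step, I would exploit the recursive structure of $Z_n$. Since $Z_n = Z_{n-1} x_n Z_{n-1}$, a length-$M$ word $W$ lies in $C(n,q,M)$ if and only if there is some decomposition $W = W' A W'$ with $W' \in C(n-1,q,N)$ for some $N$ and $A$ a nonempty word of length $M-2N$. Moreover, any $W' \in C(n-1,q,N)$ has $N \geq |Z_{n-1}| = 2^{n-1}-1$, and nonemptiness of $A$ forces $N \leq \lfloor (M-1)/2 \rfloor$. Each $W \in C(n,q,M)$ admits at least one such decomposition (possibly several, as in the preceding lemma), so counting triples $(N, W', A)$ gives
$$|C(n,q,M)| \leq \sum_{N=2^{n-1}-1}^{\lfloor (M-1)/2 \rfloor} |C(n-1,q,N)| \cdot q^{M-2N}.$$

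Now I would apply the inductive hypothesis $|C(n-1,q,N)| \leq \left(\tfrac{q}{q-1}\right)^{n-2} q^{N - 2^{n-1} + n}$ and pull the $N$-independent factors out, giving
$$|C(n,q,M)| \leq \left(\tfrac{q}{q-1}\right)^{n-2} q^{M + n - 2^{n-1}} \sum_{N \geq 2^{n-1}-1} q^{-N}.$$
The geometric tail evaluates to $\tfrac{q}{q-1} \cdot q^{-(2^{n-1}-1)}$, which combines with the existing factors to produce exactly $\left(\tfrac{q}{q-1}\right)^{n-1} q^{M - 2^n + n + 1}$, as desired.

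There is no real obstacle here; the only subtle point is recognizing that counting all valid triples $(N, W', A)$ rather than trying to enforce a canonical decomposition yields a clean upper bound, and that the lower bound $N \geq 2^{n-1}-1$ on the range of summation (forced by $W'$ itself being a $Z_{n-1}$-instance) is exactly what makes the geometric sum contribute the missing factor of $q^{1 - 2^{n-1}}$ needed to turn the exponent $M + n - 2^{n-1}$ into $M - 2^n + n + 1$.
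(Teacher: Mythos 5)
Your proposal is correct and follows essentially the same argument as the paper: induction on $n$ with base case $|C(1,q,M)| = q^M$, overcounting $Z_n$-instances by summing over all decompositions $W'AW'$ with $W'$ a $Z_{n-1}$-instance of length $N \geq 2^{n-1}-1$, and bounding the resulting sum by the full geometric tail $\sum_{N \geq 2^{n-1}-1} q^{-N} = \frac{q}{q-1}\,q^{-(2^{n-1}-1)}$. The only difference is cosmetic (you index the step as $n-1 \to n$ rather than $n \to n+1$), and your closing observation about why the lower limit of summation produces exactly the needed power of $q$ is the same bookkeeping the paper performs.
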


\begin{proof}
	The proof proceeds by induction on $n$. For the base case, set $n=1$. Every non-empty word is an instance of $Z_1$, so $|C(1,q,M)| = q^M$.
	
	For the inductive hypothesis, assume the claim is true for some positive $n$. The first inequality below derives from the following way to overcount the number of $Z_{n+1}$-instances of length $M$. Every such word can be written as $UVU$ where $U$ is a $Z_n$-instance of length $j<M/2$. Since an instance of $Z_n$ can be no shorter than $Z_n$, we have $2^n-1 \leq j < M/2$. For each possible $j$, there are $|C(n,q,j)|$ ways to choose $U$ and $q^{M-2j}$ ways to choose $V$. This is an overcount, since a Zimin-instance may have multiple decompositions.

	\begin{eqnarray*}
		|C(n+1,q,M)| & \leq & \sum_{j = 2^n-1}^{\floor{(M-1)/2}} |C(n,q,j)|q^{M-2j} \\
		& \leq & \sum_{j = 2^n-1}^{\floor{(M-1)/2}}\left(\frac{q}{q-1}\right)^{n-1}q^{(j-2^n+n+1)}q^{M-2j}\\
		& = & \left(\frac{q}{q-1}\right)^{n-1}q^{(M-2^n+n+1)} \sum_{j = 2^n-1}^{\floor{(M-1)/2}}q^{-j}\\
		& < & \left(\frac{q}{q-1}\right)^{n-1}q^{(M-2^n+n+1)} \sum_{j = 2^n-1}^{\infty}q^{-j}\\
		& = & \left(\frac{q}{q-1}\right)^{n-1}q^{(M-2^n+n+1)}\left(\frac{q^{-(2^n-1)+1}}{q-1}\right)\\
		& = & \left(\frac{q}{q-1}\right)^{(n-1)+1}q^{(M -2^{n+1}+ (n+1)+1)}.
	\end{eqnarray*}
\end{proof}

\begin{cor}	
	For all $n,M \in \Z^+$,
	$$\Pr\left(W \in C(n,q,M) \mid W\in \{x_0,\ldots,x_{q-1}\}^M\right) \leq \left(\frac{q}{q-1}\right)^{n-1}q^{(-2^n+n+1)},$$
	assuming uniform probability on words of length $M$.
\end{cor}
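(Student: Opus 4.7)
The plan is to derive this corollary directly from the preceding lemma by a single division. Under the uniform distribution on the $q^M$ words in $\{x_0,\ldots,x_{q-1}\}^M$, the probability in question equals $|C(n,q,M)|/q^M$. Applying the upper bound from the previous lemma,
$$\frac{|C(n,q,M)|}{q^M} \leq \frac{\left(\frac{q}{q-1}\right)^{n-1}q^{(M-2^n+n+1)}}{q^M} = \left(\frac{q}{q-1}\right)^{n-1}q^{(-2^n+n+1)},$$
which is exactly the stated bound.

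Because the derivation is a one-line quotient of quantities already in hand, there is no real obstacle: the substance of the argument lives entirely in the lemma, and the corollary is just the probabilistic reformulation. The only thing worth double-checking is that $M$ appears nowhere on the right-hand side, i.e., that the factor $q^M$ in the numerator of the lemma's bound cancels the denominator $q^M$ from the sample-space size exactly; the exponent bookkeeping $M-(2^n-n-1)-M = -(2^n-n-1) = -2^n+n+1$ confirms this. This is the setup needed for a first moment argument: if the right-hand side is strictly less than $1$ for some target pair $(n,q)$, then a uniformly random word of length $M$ avoids being a $Z_n$-instance with positive probability, yielding a lower bound on the length to which $q$-ary words can grow while still avoiding Zimin-instance status (and, in subsequent sections, presumably by combining with a union bound over subword positions, a lower bound on $f(n,q)$).
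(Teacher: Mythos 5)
Your proof is correct and is exactly the argument the paper intends (the paper leaves this corollary unproved precisely because it is the one-line division by $q^M$ that you carry out). The exponent bookkeeping checks out, so there is nothing to add.
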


\begin{thm}
	$$f(n,q) \geq q^{2^{(n-1)}(1+o(1))} \quad (q\rightarrow \infty, n \rightarrow \infty).$$
\end{thm}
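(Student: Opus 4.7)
The plan is to apply the first moment method (union bound) using the tail estimate from the preceding corollary. Let $W$ be a uniformly random word in $\{x_0,\ldots,x_{q-1}\}^M$. Any contiguous subword of $W$ of length $\ell$ is itself uniform on $\{x_0,\ldots,x_{q-1}\}^\ell$, so by the corollary its probability of being a $Z_n$-instance is at most
$$p := \left(\frac{q}{q-1}\right)^{n-1} q^{-2^n+n+1},$$
independent of $\ell$. Since $W$ has $\binom{M+1}{2} \leq M^2$ contiguous subwords, linearity of expectation gives
$$\mathbb{E}[\text{number of } Z_n\text{-instance subwords of }W] \leq M^2 p.$$

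Next I would choose $M$ so that $M^2 p < 1$. Then with positive probability $W$ contains no $Z_n$-instance as a subword, i.e., $W$ avoids $Z_n$, which certifies $f(n,q) > M$. Explicitly, taking $M = \lfloor p^{-1/2} \rfloor$ yields
$$f(n,q) \;\geq\; \left(\frac{q-1}{q}\right)^{(n-1)/2} q^{(2^n - n - 1)/2}.$$

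Finally, I would carry out the asymptotic calculation. Taking logarithms and dividing by $2^{n-1}\log q$,
$$\frac{\log f(n,q)}{2^{n-1}\log q} \;\geq\; 1 - \frac{n+1}{2^n} + \frac{n-1}{2^n}\cdot\frac{\log(1 - 1/q)}{\log q}.$$
As $n\to\infty$ and $q\to\infty$, the second term tends to $0$ and the third term (being bounded in absolute value by $(n-1)/2^n$) also tends to $0$. Hence the ratio tends to $1$, giving $f(n,q) \geq q^{2^{n-1}(1+o(1))}$ as required.

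I do not expect any serious obstacle: the real work was done in establishing the per-word probability bound in the previous lemma, and the present step is a routine first-moment argument. The only care needed is in the asymptotic bookkeeping to convert the clean bound $q^{(2^n-n-1)/2}$ into the form $q^{2^{n-1}(1+o(1))}$ uniformly in both $n\to\infty$ and $q\to\infty$; this amounts to observing that the correction factor $(n+1)/2^n$ in the exponent is negligible once $n$ is large, and the geometric prefactor contributes only $o(2^{n-1}\log q)$ to the logarithm.
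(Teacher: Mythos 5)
Your proposal is correct and follows essentially the same route as the paper: a first-moment/union-bound argument over the at most $\binom{M+1}{2}\leq M^2$ contiguous subwords, using the length-independent probability bound from the preceding corollary, then solving $M^2p<1$ and checking that the exponent $(2^n-n-1)/2$ together with the geometric prefactor is $2^{n-1}(1+o(1))$. The only cosmetic difference is that the paper bounds $\left(\frac{q}{q-1}\right)^{n-1}$ by $e^{(n-1)/(q-1)}$ before solving for $M$, whereas you carry the exact factor through the asymptotics.
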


\begin{proof}
	Let word $W$ consist of $M$ uniform, independent random selections from the alphabet $\{x_0, \ldots, x_{q-1}\}$.
	Define the random variable $X$ to count the number of subwords of $W$ that are instances of $Z_n$ (including repetition if a single subword occurs multiple times in $W$):
	$$X = |\{V \mid W \geq V \in C(n,q,|V|)\}|.$$
	By monotonicity with respect to word length:
	\begin{eqnarray*}
		E(X) & \leq & |\{V \mid V \leq W\}| \cdot \Pr(W \in C(n,q,M))\\	
		& \leq & \binom{M+1}{2} \left(\frac{q}{q-1}\right)^{n-1}q^{(-2^n+n+1)}\\	
		& < & M^2 e^{(n-1)/(q-1)}q^{(-2^n+n+1)}.
	\end{eqnarray*}
	
	There exists a word of length $M$ that avoids $Z_n$ when $E(X) < 1$.
	It suffices to show that:
	$$M^2 \left(e^{(n-1)/(q-1)}q^{(-2^n+n+1)}\right) \leq 1.$$
	
	Solving for $M$:
	\begin{eqnarray*}
		M &\leq&  \left(e^{(n-1)/(q-1)}q^{(-2^n+n+1)}\right)^{-1/2}\\
		&=& q^{2^{(n-1)}}\left(e^{(n-1)/(q-1)}q^{(n+1)}\right)^{-1/2}\\
		&=& q^{2^{(n-1)}(1+o(1))}.
	\end{eqnarray*}
\end{proof}


\section*{Continuing work}

Current efforts to improve bounds on the probability that a word is an instance of $Z_n$ will help close the gap between the lower and upper bounds on $f(n,q).$

\section*{Acknowledgements}

We would like to express our gratitude to Professor George McNulty for his algebraic contributions to the field of word avoidability \cite{McNulty}, and for bringing this topic to our attention. Also, to the organizers of the {\it 45th Southeast International Conference on Combinatorics, Graph Theory, and Computing}, who accepted this paper for presentation on 04 March 2014. And finally, to the participants of the Graduate Discrete Mathematics Seminar at the University of South Carolina, who formed a wonderful audience for early presentations of this work.



\newpage

\section*{Appendix: Binary Words that Avoid $Z_n$}

\subsection*{All binary words that avoid $Z_2$.} \text{}\\
The following 13 words are the only words over the alphabet $\{0,1\}$ that avoid $Z_2 = aba$. 

$$\begin{matrix}
	\varepsilon, & 0, & 00, & 001, & 0011, \\
	& & 01, & 011, & \\
	& 1, & 10, & 100, & \\
	& & 11, & 110, & 1100. 
\end{matrix}$$

\vspace{.5cm}

\subsection*{Maximum-length binary words that avoid $Z_3$.}\text{}\\
The following 48 words are the only words of length $f(3,2)-1 = 28$ over the alphabet $\{0,1\}$ that avoid $Z_3= abacaba$. All binary words of length $f(3,2) = 29$ or longer encounter $Z_3$. This result is easily, computationally verified by constructing the binary tree of words on $\{0,1\}$, eliminating branches as you find words that encounter $Z_3$.\\

\begin{center}
	{\small
		\begin{tabular}{c c}
			\begin{tabular}{c}
				0010010011011011111100000011,\\
				0010010011111100000011011011,\\
				0010010011111101101100000011,\\
				0010101100110011111100000011,\\
				0010101111110000001100110011,\\
				0010101111110011001100000011,\\
				0011001100101011111100000011,\\
				0011001100111111000000101011,\\
				0011001100111111010100000011,\\
				0011011010010011111100000011,\\
				0011011011111100000010010011,\\
				0011011011111100100100000011,\\
				0011111100000010010011011011,\\
				0011111100000010101100110011,\\
				0011111100000011001100101011,\\
				0011111100000011011010010011,\\
				0011111100100100000011011011,\\
				0011111100100101101100000011,\\
				0011111100110011000000101011,\\
				0011111100110011010100000011,\\
				0011111101010000001100110011,\\
				0011111101010011001100000011,\\
				0011111101101100000010010011,\\
				0011111101101100100100000011,
			\end{tabular}
			&
			\begin{tabular}{c}
				1100000010010011011011111100,\\
				1100000010010011111101101100,\\
				1100000010101100110011111100,\\
				1100000010101111110011001100,\\
				1100000011001100101011111100,\\
				1100000011001100111111010100,\\
				1100000011011010010011111100,\\
				1100000011011011111100100100,\\
				1100000011111100100101101100,\\
				1100000011111100110011010100,\\
				1100000011111101010011001100,\\
				1100000011111101101100100100,\\
				1100100100000011011011111100,\\
				1100100100000011111101101100,\\
				1100100101101100000011111100,\\
				1100110011000000101011111100,\\
				1100110011000000111111010100,\\
				1100110011010100000011111100,\\
				1101010000001100110011111100,\\
				1101010000001111110011001100,\\
				1101010011001100000011111100,\\
				1101101100000010010011111100,\\
				1101101100000011111100100100,\\
				1101101100100100000011111100.
			\end{tabular}
		\end{tabular}
	}
\end{center}

\newpage

\subsection*{A long binary word that avoid $Z_4$:}\text{}\\
The following binary word of length 10482 avoids $Z_4 = abacabadabacaba$. This implies that $f(4,2)\geq 10483$. The word is presented here as an image with each row, consisting of 90 squares, read left to right. Each square, black or white, represents a bit. For example, the longest string of black in the first row is 14 bits long. We cannot have the same bit repeated $15 = |Z_4|$ times consecutively, as that would be a $Z_4$-instance. A string of 14 white bits is found in the 46th row.\\

\begin{center}
\includegraphics[width=270px]{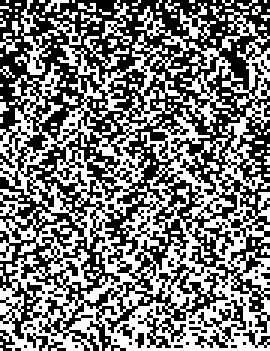}
\end{center}

\newpage

\subsection*{Verifying that a word avoids $Z_n$:}\text{}\\
The code to generate a $Z_4$-avoiding word of length 10482 is messy. The following, easy-to-validate, inefficient, brute-force, Sage \cite{Sage} code was used for verification of the word above. It took about half a day, running on an Intel\textregistered\,Core\texttrademark\,i5-2450M CPU $@$ 2.50GHz $\times$ 4. \\

\begin{lstlisting}[frame=single]
#Recursive function to test if V is an instance of Z_n
def inst(V,n):
	if n==1:
		if len(V)>0:
			return 1
		return 0
	else:
		top = ceil(len(V)/2)
		for i in range(2^(n-1)-1,top):
			if V[:i]==V[-i:]:
				if inst(V[:i],n-1):
					return 1
		return 0

#Paste word here as a string
W = 
L = len(W)
n = 4

#Check every subword V of length at least 2^n-1
for b in range(L+1):
	for a in range(b-(2^n-1)):
		if inst(W[a:b],n):
			print a,b,W[a:b]
\end{lstlisting}


\begin{thebibliography}{5}
	\bibitem{Lothaire}
		M. Lothaire.
		{\it Algebraic Combinatorics on Words.} 
		Cambridge University Press, Cambridge, 2002.

	\bibitem{McNulty}
		D.R. Bean, A. Ehrenfeucht, \& G.F. McNulty.
		Avoidable Patterns in Strings of Symbols,
		{\it Pac. J. of Math.}
		{\bf 85:2}:261--294, 1979.

	\bibitem{Ramsey}
		F.P. Ramsey.
		On a problem in formal logic, {\it Proc. London Math. Soc.}
		{\bf 30}:264--286, 1929.

	\bibitem{Sage}
		W.A. Stein, et al.
		Sage Mathematics Software (Version 6.1.1),
		The Sage Development Team.
		(2014)
		http://www.sagemath.org.

	\bibitem{Thue}
		A. Thue.
		{\"U}ber unendliche Zeichenreihen, 
		{\it Norske Vid. Skrifter I Mat.-Nat. Kl.}
		Christiania 7:1--22, 1906.
		
	\bibitem{Zimin}
		A.I. Zimin.
		Blocking sets of terms,
		{\it Mat. Sb.}
		{\bf 119}, 1982;
		{\it Math. USSR-Sb.}
		{\bf 47}:353--364, 1984.


\end{thebibliography}
\end{document}